\newtheorem{theorem}{Theorem}[section]
\newtheorem{lemma}[theorem]{Lemma}
\newtheorem{corollary}[theorem]{Corollary}
\newtheorem{proposition}[theorem]{Proposition}
\newenvironment{proof}{{\bf Proof.}}{\hfill$\Box$\\}
\newenvironment{proof of}{{\bf Proof of}}{\hfill$\Box$\\}
\newcommand{\R}{\mathbb{R}}
\newcommand{\FF}{\mathcal{F}}
\newcommand{\nn}{\nabla^\mathcal{F}}
\title{{\bf Webs and projective structures on a plane}}
\author{
Wojciech Kry\'nski\thanks{
{\bf Department of Applied Mathematics and Theoretical Physics, Cambridge University, Wilberforce Road, Cambridge, CB3 OWA, UK}\newline and\newline {\bf Institute of Mathematics, Polish Academy of Sciences, ul.~\'Sniadeckich 8, 00-956 Warszawa, Poland}\newline 
E-mail: krynski@impan.pl.}
}
\date{}
\begin{document}
\maketitle
\begin{abstract}
We prove that there is a correspondence between projective structures defined by torsion-free connections with skew-symmetric Ricci tensor and Veronese webs on a plane. The correspondence is used to characterise the projective structures in terms of second order ODEs. 
\end{abstract}

\section{Introduction}
A web is a family of foliations on a manifold. In the present paper we concentrate on the simplest example which is a 3-web on a plane, i.e. a triple of one-dimensional foliations in the general position on $\R^2$ (see \cite{AG,N}). We show that a 3-web defines a projective structure on a plane. The projective structures obtained in this way are very special. Namely, they are defined by linear connections with skew-symmetric Ricci tensor. Additionally, the associated twistor space fibers over the projective space $\R P^1$.

The existence of the fibration in the twistor picture suggests that the projective structures defined by 3-webs are two-dimensional counterparts of so-called hyper-CR Einstein-Weyl structures on $\R^3$ \cite{Dun}. In \cite{DK} we showed that the hyper-CR Einstein-Weyl structures are in a one-to-one correspondence with Veronese webs, i.e. special 1-parameter families of foliations introduced by Gelfand and Zakharevich \cite{GZ} in connection to bi-Hamiltonian systems. The similar phenomenon takes place on the plane. Indeed, one can easily extend a 3-web to a Veronese web on a plane and it is an intermediate step in the construction of a projective structure out of a 3-web. The approach gives new and simple proof of Wong's theorem \cite{W} in the stronger version of Derdzinski \cite{Der}. We also provide local forms of connections with constant skew-symmetric Ricci tensor.

The projective structures defined by connections with skew-symmetric Ricci tensors were investigated recently in \cite{Der,DW,R}. In particular \cite{R} provides a characterisation of this class of projective structures in terms of the associated second order differential equation. We describe an alternative approach in terms of the dual equation at the end of the paper. The result is based on our earlier characterisation of Veronese webs (and more generally Kronecker webs) in terms of ODEs \cite{K2} and involve so-called time-preserving contact transformations \cite{JK}.

\section{3-webs and Veronese webs}
A 3-web on a plane is a triple $\{\FF_1,\FF_2,\FF_3\}$ of one-dimensional foliations such that at any point $x\in \R^2$ any two of them intersect transversely. One can always find a coordinate system $(x,y)$ such that
$$
T\FF_1=\ker dx,\qquad T\FF_3=\ker dy.
$$
Then
$$
T\FF_2=\ker dw=\ker(w_xdx+w_ydy)
$$
for some function $w=w(x,y)$. By the assumption on the transversality of the foliations we get that both $w_x$ and $w_y$ are nowhere vanishing.

Let us notice that any 3-web can be extended to a 1-parameter family $\{\FF_{(s:t)}\}_{(s:t)\in\R P^1}$ of foliations parametrised by points in a projective line $\R P^1$ and such that
\begin{equation}\label{eq1}
\FF_{(1:0)}=\FF_1,\qquad\FF_{(0:1)}=\FF_3,\qquad\FF_{(s_0:t_0)}=\FF_2
\end{equation}
for some fixed point $(s_0:t_0)\in\R P^1$. Namely, we can consider the following family of one-forms
\begin{equation}\label{eq2}
\omega_{(s:t)}=st_0w_xdx+ts_0w_ydy
\end{equation}
and then one sees that condition \eqref{eq1} is satisfied if we define $\FF_{(s:t)}$ by
$$
T\FF_{(s:t)}=\ker\omega_{(s:t)}
$$

The so-obtained family $\{\FF_{(s:t)}\}$ is very special. It depends linearly on a projective parameter $(s:t)$ and it is an example of so-called Veronese webs \cite{GZ,Z}. In general, a 1-parameter family of corank-one foliations on a manifold $M$ of dimension $n+1$ is a Veronese web if any $x\in M$ has a neighbourhood $U$ such that there exist point-wise independent one-forms $\omega_0,\ldots,\omega_n$ on $U$ such that
$$
T\FF_{(s:t)}|_U=\ker{s^n\omega_0+s^{n-1}t\omega_1+\cdots+t^n\omega_n}.
$$
At any point $x\in M$ the mapping
$$
(s:t)\mapsto\R(s^n\omega_0(x)+s^{n-1}t\omega_1(x)+\cdots+t^n\omega_n(x))\in P(T^*_xM)
$$
is a Veronese embeding and it justifies the terminology.

Specifying to $n=2$ we get the following correspondence
\begin{proposition}\label{prop1}
Let $(s_0:t_0)\in \R P^1$ be fixed. Any 3-web on a plane extends uniquely to a Veronese web on $\R^2$ such that \eqref{eq1} is satisfied. Conversely, for a Veronese web $\{\FF_{(s:t)}\}$, the triple $\{\FF_{(1:0)},\FF_{(s_0:t_0)},\FF_{(0:1)}\}$ is a 3-web.
\end{proposition}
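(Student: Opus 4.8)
The plan is to prove the two directions separately, using the explicit coordinate model set up in the preceding paragraphs. For the first (extension) direction, given a 3-web I would start from the normalising coordinates $(x,y)$ in which $T\FF_1=\ker dx$, $T\FF_3=\ker dy$, and $T\FF_2=\ker(w_xdx+w_ydy)$ with $w_x,w_y$ nowhere vanishing, and then exhibit the family \eqref{eq2}, namely $\omega_{(s:t)}=st_0w_xdx+ts_0w_ydy$, as a witness that the extension exists: setting $\omega_0=t_0w_x\,dx$ and $\omega_2=s_0w_y\,dy$ (and $\omega_1=0$, which is harmless as long as one checks the required pointwise independence is used only where it holds — in fact one should pass to a genuinely independent triple, e.g. $\omega_0=t_0w_x\,dx$, $\omega_1$ any one-form completing a frame, $\omega_2=s_0w_y\,dy$, and observe the given quadratic family is $s^2\omega_0+t^2\omega_2$), one sees this is of the Veronese form with $n=2$ and that \eqref{eq1} holds by direct substitution of $(1:0)$, $(0:1)$, $(s_0:t_0)$.

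The substance of the first direction is the \emph{uniqueness} claim. Here the key step is a linear-algebra/interpolation argument: a Veronese web satisfying \eqref{eq1} is determined pointwise by a quadratic curve $(s:t)\mapsto \R(s^2\omega_0(x)+st\,\omega_1(x)+t^2\omega_2(x))$ in $P(T^*_x\R^2)\cong\R P^1$, and a Veronese embedding of $\R P^1$ into $\R P^1$ of degree $2$ as a \emph{reparametrised projective line} — i.e. the map is actually a degree-one map in disguise because the target is one-dimensional. Concretely, $s^2\omega_0+st\,\omega_1+t^2\omega_2$ spans a line in a two-dimensional space, so the $\R$-span depends on $(s:t)$ only through the ratio of two quadratic forms; requiring it to hit three prescribed lines at three prescribed parameter values pins down that ratio uniquely up to the (irrelevant) overall scaling, hence pins down each foliation $\FF_{(s:t)}$. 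The main obstacle I expect is making this "degree $2$ curve in $\R P^1$ is really a projective line reparametrisation" statement precise and checking it genuinely forces uniqueness of \emph{the foliations} (not merely of the one-forms up to scale); this amounts to showing the two quadratics $s^2(\cdot)+st(\cdot)+t^2(\cdot)$ and the known family are proportional as sections of $T^*\R^2$, which one does by evaluating at the three points of \eqref{eq1} and using that a quadratic is determined by three values.

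For the converse direction I would argue that transversality is automatic from the Veronese property: at each $x$, the three one-forms $\omega_{(1:0)}(x)=\omega_0(x)$, $\omega_{(s_0:t_0)}(x)$, and $\omega_{(0:1)}(x)=\omega_2(x)$ are values of the Veronese curve at three distinct points of $\R P^1$, and since a degree-two reparametrised line passes through any point of $\R P^1$ at most — well, one checks that any two of these three are linearly independent. This again reduces to a $2\times 2$ determinant computation: $\omega_0$ and $\omega_2$ are independent by the definition of Veronese web (the $\omega_i$ are pointwise independent, so in particular $\omega_0,\omega_2$ are), and independence of $\omega_{(s_0:t_0)}$ from each of $\omega_0,\omega_2$ follows because $s_0,t_0\neq 0$ together with the independence of $\{\omega_0,\omega_1\}$ and $\{\omega_1,\omega_2\}$ — here one uses that on a plane ($n=2$) pointwise independence of $\omega_0,\omega_1,\omega_2$ in the two-dimensional cotangent space forces any two of them to be a basis. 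Thus the triple $\{\FF_{(1:0)},\FF_{(s_0:t_0)},\FF_{(0:1)}\}$ is in general position, i.e. a 3-web, completing the proof.
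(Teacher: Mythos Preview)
You have confused the degree of the Veronese curve. On the plane $\R^2$ the ambient dimension is $n+1=2$, so $n=1$: the defining family of one-forms is \emph{linear}, $\omega_{(s:t)}=s\omega_0+t\omega_1$, exactly as in formula \eqref{eq2} with $\omega_0=t_0w_x\,dx$ and $\omega_1=s_0w_y\,dy$. (The phrase ``specifying to $n=2$'' preceding the Proposition is evidently meant as ``dimension $2$''; note that \eqref{eq2} is linear in $(s,t)$ and the text says so explicitly.) Your proposal instead works throughout with $n=2$, i.e.\ a quadratic family $s^2\omega_0+st\,\omega_1+t^2\omega_2$, and this wrong setup propagates genuine errors: in the existence step you are forced to take $\omega_1=0$, which violates the pointwise-independence requirement in the definition of a Veronese web; in the converse step you invoke ``pointwise independence of $\omega_0,\omega_1,\omega_2$ in the two-dimensional cotangent space'', which is impossible for three covectors in a two-dimensional space.

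Once you replace $n=2$ by $n=1$, your outline collapses to the paper's own (very short) argument. Existence is just formula \eqref{eq2}. Uniqueness is the fact that a degree-one Veronese curve $\R P^1\to P(T^*_x\R^2)\cong\R P^1$ is a M\"obius transformation, hence determined by its values at three points---which is precisely the one-line justification the paper gives immediately after the Proposition. For the converse, $\omega_0$ and $\omega_1$ are independent by definition, and $\omega_{(s_0:t_0)}=s_0\omega_0+t_0\omega_1$ is independent of each since $s_0,t_0\neq 0$; no third form $\omega_2$ is needed or available.
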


The uniqueness above follows from the fact that a Veronese curve in $\R P^1$ is determined by values at three distinct points. In higher dimensions there is no so simple correspondence between finite families of foliations and Veronese webs since one has to impose additional integrability conditions on function $w$. In particular in dimension 3 one gets the Hirota equation \cite{DK,Z}.

In what follow, for the sake of convenience, we will use the affine parameter $t=(1:t)\in\R P^1$ rather than the projective one $(s:t)$. The foliation corresponding to $(0:1)$ will be denoted $\FF_\infty$. Formula \eqref{eq2} can be equivalently written as
\begin{equation}\label{eq2b}
\omega_t=t_0w_xdx+tw_ydy
\end{equation}

We have investigated the geometry of Veronese webs in \cite{JK,K2}. In particular we have introduced a linear connection associated to a web. In the present paper we will denote it $\nn$. In dimension 2 it can be written explicitly in the following form
\begin{eqnarray}
\nn_{\partial_x}\partial_x=\frac{w_yw_{xx}-w_xw_{xy}}{w_xw_y}\partial_x, &\quad&
\nn_{\partial_y}\partial_y=\frac{w_xw_{yy}-w_yw_{xy}}{w_xw_y}\partial_y,\label{eqCon}\\
\nn_{\partial_x}\partial_y=0, &\quad&\nn_{\partial_y}\partial_x=0.\nonumber
\end{eqnarray}
On the other hand, for a 3-web there is a notion of the Chern connection (see \cite{N}). If a Veronese web is defined by a 3-web then $\nn$ coincides with the Chern connection of the 3-web. Indeed, it can be verified by a direct inspection that the formulae for $\nn$ can be computed as in \cite[Theorem 1.6]{N}.

\begin{proposition}\label{prop2}
Let $\{\FF_t\}$ be a Veronese web on $\R^2$. The associated connection $\nn$ has the following properties:
\begin{enumerate}
\item[(a)] All leaves of $\FF_t$ are geodesics of $\nn$ for any $t\in\R$.
\item[(b)] The torsion of $\nn$ vanishes.
\item[(c)] The Ricci curvature tensor of $\nn$ is skew-symmetric.
\end{enumerate}
\end{proposition}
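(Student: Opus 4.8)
The plan is to reduce to the adapted coordinates and compute. By Proposition~\ref{prop1} any Veronese web on $\R^2$ is, locally, the one associated with its $3$-web, so we may choose coordinates $(x,y)$ with $T\FF_{(1:0)}=\ker dx$ and $T\FF_{(0:1)}=\ker dy$, in which $\nn$ is given by~\eqref{eqCon}; since properties (a)--(c) are coordinate-free, it suffices to verify them there. Write $a:=\Gamma^1_{11}=(w_yw_{xx}-w_xw_{xy})/(w_xw_y)$ and $b:=\Gamma^2_{22}=(w_xw_{yy}-w_yw_{xy})/(w_xw_y)$; all remaining Christoffel symbols vanish. The one simplification I would record at the outset is
$$
a=\partial_x\ln\left|\frac{w_x}{w_y}\right|,\qquad
b=-\partial_y\ln\left|\frac{w_x}{w_y}\right|,
$$
which is what makes the cancellations below transparent. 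Property (b) is then immediate: $\nn_{\partial_x}\partial_y=\nn_{\partial_y}\partial_x=0$ and $[\partial_x,\partial_y]=0$, so the torsion vanishes identically.

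For (a), observe that by~\eqref{eq2b} the leaves of $\FF_t$ (for every finite $t$, the value $t=0$ included) are the integral curves of $V_t=tw_y\,\partial_x-t_0w_x\,\partial_y$, which spans $\ker\omega_t$, while the leaves of $\FF_\infty$ are the lines $y=\const$. A one-dimensional foliation is totally geodesic exactly when some --- equivalently every --- local vector field $V$ spanning its tangent line satisfies that $\nn_VV$ is pointwise proportional to $V$: replacing $V$ by $fV$ alters $\nn_VV$ only by a multiple of $V$, and when the condition holds the factor solving the resulting first-order linear ODE along a leaf makes that leaf a genuine geodesic. So I would expand $\nn_{V_t}V_t$ from~\eqref{eqCon} and check that its $\partial_x$- and $\partial_y$-components stand in the ratio $tw_y:(-t_0w_x)$; equivalently, parametrising a leaf as $y=y(x)$ with $y'=-t_0w_x/(tw_y)$, one checks the unparametrised geodesic equation $y''+b\,(y')^2-a\,y'=0$. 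Inserting the logarithmic forms of $a$ and $b$ makes every term cancel; the case $\FF_\infty$ is trivial since $\nn_{\partial_x}\partial_x$ is proportional to $\partial_x$.

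For (c), I would compute the Ricci tensor straight from the Christoffel symbols. Because the only nonzero symbols are $\Gamma^1_{11}=a$ and $\Gamma^2_{22}=b$, the quadratic curvature terms enter only in the diagonal entries, where they cancel against the derivative terms, so $\mathrm{Ric}_{11}=\mathrm{Ric}_{22}=0$; the off-diagonal entries reduce to $\mathrm{Ric}_{12}=-b_x$ and $\mathrm{Ric}_{21}=-a_y$. Hence $\mathrm{Ric}$ is skew-symmetric precisely when $a_y+b_x=0$, which is immediate from the logarithmic form: $a_y+b_x=\partial_x\partial_y\ln|w_x/w_y|-\partial_x\partial_y\ln|w_x/w_y|=0$. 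In fact one obtains the explicit formula $\mathrm{Ric}=\bigl(\partial_x\partial_y\ln|w_x/w_y|\bigr)\,dx\wedge dy$, which also records exactly which Ricci form occurs and will be convenient in the sequel.

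Everything here is routine; the only real pitfalls are bookkeeping and sign conventions. The single conceptual point worth isolating is the recognition of $a$ and $b$ as mixed logarithmic derivatives of $w_x/w_y$: without it the cancellations in (a) and the identity $a_y+b_x=0$ in (c) look accidental, whereas with it they are forced. One should also bear in mind that~\eqref{eqCon} is only valid in the adapted coordinates, so the appeal to Proposition~\ref{prop1} is exactly what licenses working with it --- but since vanishing of torsion, the geodesic property, and skew-symmetry of the Ricci tensor are all tensorial notions, this reduction is harmless.
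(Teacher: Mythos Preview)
Your proof is correct and follows essentially the same strategy as the paper: reduce to the adapted coordinates via Proposition~\ref{prop1} and compute directly from~\eqref{eqCon}. The one organisational difference worth noting is that you front-load the logarithmic identities $a=\partial_x\ln|w_x/w_y|$, $b=-\partial_y\ln|w_x/w_y|$, whereas the paper carries the raw Christoffel symbols through the calculations and only records this simplification later (in the discussion of the constant-curvature example and Wong's theorem). Your choice makes the key cancellation $a_y+b_x=0$ in~(c) manifest rather than the outcome of a longer curvature computation, and it likewise streamlines~(a), where you check the unparametrised geodesic equation $y''+b(y')^2-ay'=0$ instead of the paper's direct verification that $\nn_{V_t}V_t$ is proportional to $V_t$. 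These are equivalent routes; yours is a little more conceptual, the paper's a little more explicit (it produces the curvature scalar~$\rho$ in the form~\eqref{eqR}, which you obtain only implicitly as $\partial_x\partial_y\ln|w_x/w_y|$, up to the sign convention for Ricci).
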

\begin{proof}
We assume that a Veronese web is defined by \eqref{eq2b}. Then the leaves of $\FF_t$ are integral curves of the vector field
$$
w_y\partial_x-tw_x\partial_y.
$$
We directly compute
$$
\nn_{w_y\partial_x-tw_x\partial_y}(w_y\partial_x-tw_x\partial_y)= \left(\frac{w_y^2w_{xx}-twx^2w_{yy}}{w_xw_y}\right)(w_y\partial_x-tw_x\partial_y)
$$
and it proves Statement (a). Statement (b) immediately follows from the definition of $\nn$. To prove Statement~(c) we compute non-trivial components of the curvature (3,1)-tensor tensor $R(\nn)$. We get
$$
R(\nn)(\partial_x,\partial_y)\partial_x=\rho\partial_x,\qquad R(\nn)(\partial_x,\partial_y)\partial_y=\rho\partial_y,
$$
where
\begin{equation}\label{eqR}
\rho=\frac{w_{xx}w_{xy}}{w_x^2}-\frac{w_{yy}w_{xy}}{w_y^2}-\frac{w_{xxy}}{w_x}+\frac{w_{xyy}}{w_y}.
\end{equation}
It follows that the Ricci tensor of $\nn$ is represented by the matrix
$$
Ric(\nn)=\left(\begin{array}{cc} 0 & \rho \\ -\rho & 0 \end{array} \right).
$$
\end{proof}

It can be deduced from \cite[Corollary 7.5]{K2} that any torsion-free connection with skew-symmetric Ricci tensor can be obtained as a conneciton $\nn$ for a web. Indeed, in the proof of \cite[Corollary 7.5]{K2} with $m=1$ it is shown how to construct a Veronese web on a plane with a curvature being an arbitrary function. The result was obtained in terms of the canonical frames. Here we will show a different reasoning.

Let $\nabla$ be a torsion-free connection on a plane. If $Ric(\nabla)$ is skew-symmetric then it follows from linear algebra that there exists a function $\rho$ such that $R(\nabla)(\partial_x,\partial_y)V=\rho V$ for any vector field $V$. Let us fix a point $x\in\R^2$ and chose a frame $X(x),Y(x)\in T_x\R$. Moreover, for any other $y\in\R^2$ let us choose a smooth curve $\gamma_y$ joining $x$ and $y$ and define $X(y),Y(y)\in T_y\R^2$ by the parallel transport of $X(x)$ and $Y(x)$ along $\gamma_y$. In this way we construct two vector fields $X$ and $Y$. It follows from  the property $R(\nabla)(\partial_x,\partial_y)V=\rho V$ that the frame bundle of $T\R^2$ reduces to a $GL(1,\R)$-bundle and consequently the choice of different cures $\gamma_y$ leads to vector fields $\tilde X$ and $\tilde Y$ which are proportional to $X$ and $Y$ in the same way, i.e. $\tilde X=fX$ and $\tilde Y=fY$ for some function $f\colon\R^2\to\R$. We define a Veronese web $\{\FF_t\}$ imposing that the set of leaves of $\FF_t$ is the set of integral curves of the vector field
$$
X+tY.
$$
Note that $\tilde X+t\tilde Y$ has the same integral curves as $X+tY$ and hence the web is well defined. The choice of a different frame at the initial point $x$ leads to a M\"obius transformation of the projective parameter $(s:t)$ which parametrises the foliations. In this way we proved

\begin{theorem}\label{thm1}
There is a one-to-one correspondence between Veronese webs (given up to a M\"obius transformation of the projective parameter $(s:t)$) on a plane and torsion-free connections on $\R^2$ with skew-symmetric Ricci tensor.
\end{theorem}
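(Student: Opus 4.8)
The plan is to exhibit the two maps of the asserted bijection and check that they are mutually inverse; the one technical ingredient is a rigidity property of torsion-free surface connections carrying several invariant line fields. The first map sends a Veronese web $\{\FF_t\}$ to $\nn$: by Proposition~\ref{prop2}(b)--(c) this is torsion-free with skew-symmetric Ricci tensor, and a short computation with \eqref{eqCon} shows that \emph{every} $T\FF_t$ is $\nn$-invariant, not merely geodesic (the fields $\spn\{\partial_x\}=T\FF_{(0:1)}$ and $\spn\{\partial_y\}=T\FF_{(1:0)}$ are visibly invariant, and so are the intermediate ones). In particular $\nn$ depends only on the unparametrised family $\{\FF_t\}$, so the map descends to M\"obius-classes. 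The second map sends a torsion-free $\nabla$ with skew-symmetric $Ric(\nabla)$ to the web built in the paragraph preceding the statement: since $R(\nabla)(\partial_x,\partial_y)=\rho\,\Id$, the holonomy of $\nabla$ lies in the scalar matrices, and as $\R^2$ is simply connected the parallel transport of a \emph{line} is path-independent; thus $\spn\{X\}$, $\spn\{Y\}$ and, more generally, $\spn\{X+tY\}$ are well-defined parallel line fields, the foliations $\FF_t$ depend only on the initial lines, and a $GL(2,\R)$-change of the starting frame turns $X+tY$ into a pointwise multiple of $X+t'Y$ with $t'$ a M\"obius function of $t$. That $\{\FF_t\}$ is genuinely a Veronese web is immediate: in the coframe $e^1,e^2$ dual to $X,Y$ one has $T\FF_{(s:t)}=\ker(t\,e^1-s\,e^2)$.

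The rigidity lemma I would establish first reads: a torsion-free connection on a surface admitting three pairwise distinct invariant line fields is uniquely determined by them, and its invariant line fields then form a one-parameter family. Indeed, in frames $E_1,E_2$ spanning two of the line fields one has $\nabla_VE_1=b(V)E_1$ and $\nabla_VE_2=a(V)E_2$; vanishing of the torsion expresses $a(E_1)$ and $b(E_2)$ through $[E_1,E_2]$, and writing the third line field as $\spn\{E_1+cE_2\}$ its invariance amounts to $b-a=d\log|c|$, which then fixes $b(E_1)$ and $a(E_2)$ too, hence $\nabla$; the same equation $dc=c(b-a)$, with $b-a$ closed and the domain simply connected, has solutions $c$ forming — together with the two frame directions — the asserted $\R P^1$.

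Granting the lemma, the two compositions close. Starting from $\nabla$: the web obtained from it has all its tangent line fields $\nabla$-parallel, and the associated connection of that web also has them invariant by the computation above, so the two connections coincide by uniqueness. Starting from a web: Proposition~\ref{prop2}(c) puts $\nn$ in the domain of the second map; choosing the initial frame along the directions of $\FF_{(1:0)}$ and $\FF_{(0:1)}$ makes $\spn\{X\}$ and $\spn\{Y\}$ parallel line fields of $\nn$ agreeing at one point with $T\FF_{(1:0)}$ and $T\FF_{(0:1)}$, hence everywhere (an invariant line field being determined by its value at a point); the new web's tangent line fields are then parallel line fields of $\nn$ exhausting, by the converse half of the lemma, its $\R P^1$ of invariant line fields, which is exactly the family of tangent line fields of the original web; so the two webs agree as families of foliations, and their parameters agree up to a M\"obius transformation.

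The step I expect to cost the most care is this last one: that the recovered parameter differs from the original by a \emph{constant} M\"obius transformation rather than by a rescaling of $t$ by a function on $\R^2$. Concretely, with $X=p\,\partial_y$ and $Y=q\,\partial_x$ in coordinates adapted to $\{\FF_t\}$, one must see that $\tfrac{w_xq}{w_yp}$ is constant, which unwinds to the identity $a-b=d\log|w_y/w_x|$ for the connection~\eqref{eqCon} — a brief but genuinely Veronese-specific computation (it fails for a general 3-web carrying the analogous parallel line field), and exactly the point where the linear dependence of $\{\FF_t\}$ on $(s:t)$ enters.
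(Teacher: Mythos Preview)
Your proposal is correct and uses the same two constructions as the paper (web $\to \nn$ via Proposition~\ref{prop2}; connection $\to$ web via parallel transport of a frame, exploiting that the holonomy lies in scalars). The paper, however, simply exhibits these two maps and declares the theorem proved without checking that they are mutually inverse; your rigidity lemma---that a torsion-free surface connection is pinned down by three invariant line fields, which then automatically extend to an $\R P^1$-family---is a clean and correct device to supply that missing verification, and your identification of the constancy of the M\"obius reparametrisation (equivalently the identity $a-b=d\log|w_y/w_x|$ for \eqref{eqCon}) as the one step requiring a genuine computation is on the mark.
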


We provide the following examples as applications of Theorem \ref{thm1}:
\vskip 1ex
1. {\bf Flat case.} It is clear that the flat connection corresponds to the linear function $w(x,y)=x+y$ and the associated web is defined by the one-form
$$
\omega_t=dx+tdy.
$$

\vskip 2ex
2. {\bf Constant curvature.} In order to find a torsion-free connection with constant skew-symmetric Ricci tensor one has to solve the equation
$$
\rho=C,
$$
where $\rho$ is given by \eqref{eqR} and $C\in\R$ is constant. The formula for $\rho$ can be written in more compact way
$$
\rho=\left(\frac{w_{xy}}{w_y}\right)_y -\left(\frac{w_{xy}}{w_x}\right)_x= \ln(w_y)_{xy}-\ln(w_x)_{xy}= \ln\left(\frac{w_y}{w_x}\right)_{xy}.
$$
Thus, the equation $\rho=C$ gives
$$
\frac{w_y}{w_x}=e^{Cxy}
$$
or $w_y=e^{Cxy}w_x$. This equation can be solved using the method of characteristics. However the knowledge of an exact solution is not necessary because we can always multiply the one-form $\omega_t$ from formula \eqref{eq2b} by a function and the resulting one-form defines the same Veronese web. Thus, multiplying $\omega_t$ by $w_x^{-1}$, we get that the web corresponding to a connection with $\rho=C$ is defined by the one-form
$$
dx+te^{Cxy}dy.
$$
The connection is given by
$$
\nn_{\partial_x}\partial_x=-Cy\partial_x,\quad\nn_{\partial_y}\partial_y=Cx\partial_y,
\quad \nn_{\partial_x}\partial_y=0,\quad \nn_{\partial_y}\partial_x=0.
$$
These formulae can be derived directly from equation \eqref{eqCon} because
$$
\frac{w_yw_{xx}-w_xw_{xy}}{w_xw_y}=-\frac{w_x}{w_y}\partial_x\left(\frac{w_y}{w_x}\right), \qquad \frac{w_xw_{yy}-w_yw_{xy}}{w_xw_y}=\frac{w_x}{w_y}\partial_y\left(\frac{w_y}{w_x}\right).
$$

\vskip 2ex
3. {\bf Wong's theorem.} Derdzinski \cite[Theorem 6.1]{Der} proved that for a torsion-free connection $\nabla$ with skew-symmetric Ricci tensor one can always choose local coordinates $(x_1,x_2)$ such that the Christoffel symbols have the form $\Gamma^1_{11}=-\partial_{x_1}f$, $\Gamma^2_{22}=\partial_{x_2}f$ for some function $f$ and $\Gamma^i_{jk}=0$ unless $i=j=k$.
In view of formula~\eqref{eqCon} and our Theorem \ref{thm1} this is evident as we can write $\nn_{\partial_x}\partial_x=-\partial_x\ln\left(\frac{w_y}{w_x}\right)\partial_x$ and $\nn_{\partial_y}\partial_y=\partial_y\ln\left(\frac{w_y}{w_x}\right)\partial_y$ if $\frac{w_y}{w_x}>0$ or $\nn_{\partial_x}\partial_x=-\partial_x\ln\left(-\frac{w_y}{w_x}\right)\partial_x$ and $\nn_{\partial_y}\partial_y=\partial_y\ln\left(-\frac{w_y}{w_x}\right)\partial_y$ if $\frac{w_y}{w_x}<0$. Note that the sign of $\frac{w_y}{w_x}$ is always fixed because both $w_y$ and $w_x$ never vanish since all foliations intersect transversely.

\section{Projective structures}
Two connections on a manifold $M$ are projectively equivalent if their sets of unparametrised geodesics coincide. A projective structure is a set of unparametrised geodesics of a connection, or, equivalently, it is a class of projectively equivalent connections.

Any projective structure on a plane can be locally described in terms of a second order ODE. Namely, fixing local coordinates $(x,y)$ one can look for an equation in the form
\begin{equation}\label{eq3}
y''=\Phi(x,y,y')
\end{equation}
such that the solutions $(x,y(x))$ are geodesics for the projective structure. It can be shown that the equation satisfies
\begin{equation}\label{eqcond}
\partial_{y'}^4\Phi=0
\end{equation}
and conversely any equation satisfying this condition defines a projective structure. The condition is point invariant and in fact any projective structure corresponds to a class of point equivalent equations.

We will show now that we can construct a projective structure out of a Veronese web. Indeed we have the following
\begin{proposition}\label{prop3}
If $\{\FF_t\}$ is a Veronese web on $\R^2$ then the union of all leaves of all foliations $\FF_t$ is a projective structure defined by the associated connection $\nn$. If $\{\FF_t\}$ is given by the one-form \eqref{eq2b} then the corresponding second order ODE is of the form
\begin{equation}\label{eq3b}
y''=\frac{1}{w_xw_y}\left((w_yw_{xx}-w_xw_{xy})y'+(w_yw_{xy}-w_xw_{yy})(y')^2\right).
\end{equation}
\end{proposition}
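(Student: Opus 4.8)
The plan is to prove the two assertions of Proposition \ref{prop3} separately, both by direct computation using the explicit formula \eqref{eqCon} for $\nn$ and the relation between geodesics of a connection and second order ODEs. For the first assertion, part (a) of Proposition \ref{prop2} already tells us that every leaf of every $\FF_t$ is a geodesic of $\nn$; so the union of all these leaves is contained in the projective structure $[\nn]$. What remains is to see that this union is \emph{all} of $[\nn]$, i.e.\ that through every point $p\in\R^2$ and in every direction $v\in T_p\R^2$ there passes a leaf of some $\FF_t$. This is clear from \eqref{eq2b}: the leaf of $\FF_t$ through $p$ has tangent direction spanned by $w_y\partial_x-tw_x\partial_y$, and as $t$ ranges over $\R\cup\{\infty\}$ these directions sweep out all of $P(T_p\R^2)$ (with $t=\infty$, i.e.\ $\FF_\infty$, supplying the direction $\partial_y$). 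Hence the leaves of the Veronese web realise every unparametrised geodesic of $\nn$ exactly once, which is the content of the first sentence.

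For the ODE, I would start from the standard fact that a torsion-free connection with Christoffel symbols $\Gamma^i_{jk}$ in coordinates $(x,y)$, with $x$ the independent variable and $y=y(x)$, has geodesics satisfying
\begin{equation}\label{eqGeodODE}
y''=-\Gamma^2_{11}-(2\Gamma^2_{12}-\Gamma^1_{11})y'+(2\Gamma^1_{12}-\Gamma^2_{22})(y')^2+\Gamma^1_{22}(y')^3.
\end{equation}
This is obtained by writing the geodesic equations for a parametrised curve $(x(\tau),y(\tau))$ and eliminating the parameter, i.e.\ computing $y''=\ddot y/\dot x^2-\dot y\ddot x/\dot x^3$ and substituting the two geodesic equations. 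From \eqref{eqCon} we read off that the only nonzero Christoffel symbols of $\nn$ are $\Gamma^1_{11}=\frac{w_yw_{xx}-w_xw_{xy}}{w_xw_y}$ and $\Gamma^2_{22}=\frac{w_xw_{yy}-w_yw_{xy}}{w_xw_y}$. Plugging these into \eqref{eqGeodODE}, the cubic and constant terms drop out, the coefficient of $y'$ becomes $\Gamma^1_{11}=\frac{w_yw_{xx}-w_xw_{xy}}{w_xw_y}$, and the coefficient of $(y')^2$ becomes $-\Gamma^2_{22}=\frac{w_yw_{xy}-w_xw_{yy}}{w_xw_y}$, which is exactly \eqref{eq3b}. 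One should also remark that \eqref{eq3b} visibly satisfies \eqref{eqcond} since it is a polynomial of degree $2$ in $y'$, consistent with it defining a projective structure.

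There is no real obstacle here; the only point requiring a little care is the sign and coefficient bookkeeping in passing from the parametrised geodesic equations to \eqref{eqGeodODE}, and the observation that the projective equivalence class is independent of the choice of representative connection within $[\nn]$ — but since we are exhibiting the specific connection $\nn$ and its specific geodesics, this is automatic. I would therefore present the proof as: (i) quote Proposition \ref{prop2}(a) and the direction-sweeping argument above for the first sentence; (ii) recall \eqref{eqGeodODE}; (iii) substitute the Christoffel symbols from \eqref{eqCon} and simplify to obtain \eqref{eq3b}; (iv) note that \eqref{eqcond} holds. Alternatively, and perhaps more in the spirit of the paper, one can bypass Christoffel symbols entirely: a curve $(x,y(x))$ is a leaf of $\FF_t$ for the value $t=t(x)$ determined by $y'=-\frac{w_y}{t\,w_x}$ along it, i.e.\ $t=-\frac{w_y}{w_xy'}$; differentiating the relation $\omega_t=0$, namely $t_0w_x+tw_yy'=0$, along the curve and using that $t$ is constant on a leaf gives a first-order relation, and differentiating once more (or, equivalently, eliminating $t$ between the leaf equation and its derivative) produces \eqref{eq3b} directly. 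Either route is a short computation.
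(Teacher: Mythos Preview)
Your proof is correct. For the first assertion you actually argue more than the paper does: the paper simply invokes Proposition~\ref{prop2}(a) and leaves implicit the (easy) observation that the leaf directions exhaust $P(T_p\R^2)$, whereas you spell this out explicitly.

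For the ODE your primary route differs from the paper's. You substitute the Christoffel symbols of $\nn$ read off from \eqref{eqCon} into the standard unparametrised geodesic equation \eqref{eqGeodODE}; the paper instead writes the first-order leaf equation $y'=-t\,w_x/w_y$, differentiates once in $x$, and eliminates the constant $t$ via $t=-(w_y/w_x)\,y'$. Both computations are short and equally valid. The paper's method has the minor advantage of not invoking the general geodesic-ODE formula and of making immediately visible that \eqref{eq3b} arises by differentiating a one-parameter family of first-order equations --- a fact exploited in the very next lemma. Your alternative sketch at the end is in fact exactly the paper's argument, though note a slip there: from the tangent field $w_y\partial_x-tw_x\partial_y$ one obtains $y'=-t\,w_x/w_y$ and hence $t=-(w_y/w_x)\,y'$, not $y'=-w_y/(t\,w_x)$ as you wrote.
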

\begin{proof}
The first part follows directly from Statement (a) of Proposition \ref{prop2}. To get a description in terms of an ODE we recall that the leaves of $\FF_t$ are integral curves of the vector field $w_y\partial_x-tw_x\partial_y$. It follows that for a fixed $t$ they are solutions to the following first order equation
$$
y'=-t\frac{w_x}{w_y}.
$$
Differentiating this equation with respect to $x$ and eliminating parameter $t$ by substitution $t=-\frac{w_y}{w_x}y'$ we get \eqref{eq3b}.
\end{proof}

Moreover we have
\begin{lemma}\label{lemma1}
Equation \eqref{eq3b} is point equivalent to the derivative of a first order ODE.
\end{lemma}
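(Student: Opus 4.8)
The plan is to show that equation \eqref{eq3b} always carries a first integral which is \emph{linear} in $y'$, and then to normalise it by a point transformation. Set $g=w_y/w_x$. Since the foliations of the underlying $3$-web intersect transversely, $w_x$ and $w_y$ are nowhere zero, so $g$ is a smooth nowhere-vanishing function (of constant sign on a connected domain). A direct computation of $\partial_xg$ and $\partial_yg$ gives the identities
$$\frac{w_yw_{xx}-w_xw_{xy}}{w_xw_y}=-\frac{g_x}{g},\qquad \frac{w_yw_{xy}-w_xw_{yy}}{w_xw_y}=-\frac{g_y}{g},$$
so that \eqref{eq3b} is equivalent to $gy''+g_xy'+g_y(y')^2=0$, i.e.\ to
$$\frac{d}{dx}\Bigl(\tfrac{w_y}{w_x}\,y'\Bigr)=0 .$$
In other words \eqref{eq3b} is, already in the original coordinates, the total derivative of the first order ODE $\tfrac{w_y}{w_x}y'=\mathrm{const}$, which is precisely the equation $y'=-t\tfrac{w_x}{w_y}$ of the leaves of $\FF_t$ used in the proof of Proposition \ref{prop3}.

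To bring this to the standard shape ``$\tfrac{d}{dx}$ of $y'=h(x,y)$'' I would apply the point transformation $\tilde x=x$, $\tilde y=\phi(x,y)$, where $\phi$ is any function with $\phi_y=w_y/w_x$ (such $\phi$ is obtained by integrating in $y$, and the transformation is a local diffeomorphism because $\phi_y\neq0$). Then $\tfrac{w_y}{w_x}y'=\phi_yy'=\tilde y'-\phi_x$; expressing $\phi_x$ as a function $h(\tilde x,\tilde y)$ via the inverse $y=y(\tilde x,\tilde y)$ of $\tilde y=\phi(x,y)$, equation \eqref{eq3b} becomes $\tfrac{d}{d\tilde x}\bigl(\tilde y'-h(\tilde x,\tilde y)\bigr)=0$, that is $\tilde y''=h_{\tilde x}+h_{\tilde y}\tilde y'$, which is exactly the derivative of the first order equation $\tilde y'=h(\tilde x,\tilde y)$.

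I do not expect a genuine obstacle here. The whole content is the first of the two displayed identities above --- the fact that the coefficients of \eqref{eq3b} are minus the logarithmic $x$- and $y$-derivatives of $w_y/w_x$ --- and this is forced by the web origin of the equation; the resulting linear first integral $\tfrac{w_y}{w_x}y'$ is nothing but a defining function of the pencil $\{\FF_t\}$, reflecting Proposition \ref{prop2}(a) and Proposition \ref{prop3}. The only point that deserves a remark is that $w_y/w_x$ is globally defined with a fixed sign, which is guaranteed by the transversality condition built into the notion of a $3$-web.
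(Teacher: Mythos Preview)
Your argument is correct and uses the very same point transformation $\tilde x=x$, $\tilde y=\phi(x,y)$ with $\phi_y=w_y/w_x$ that the paper uses; the paper simply writes down this transformation and checks directly that \eqref{eq3b} becomes $\tilde y''=h_{\tilde x}+h_{\tilde y}\tilde y'$ with $h(\tilde x,\tilde y)=\phi_x(\tilde x,\phi^{-1}(\tilde x,\tilde y))$. Your preliminary observation that \eqref{eq3b} is already $\frac{d}{dx}\bigl(\frac{w_y}{w_x}y'\bigr)=0$ in the original coordinates is a nice addition --- it explains \emph{why} the transformation is chosen (to turn the linear first integral $\phi_y y'$ into $\tilde y'-\phi_x$) --- but it is not a genuinely different route.
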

\begin{proof}
Let $\phi\colon \R^2\to\R$ be a solution to
$$
\partial_y\phi=\frac{w_y}{w_x}.
$$
We define the following point transformation
$$
\tilde x=x,\qquad \tilde y=\phi(x,y)
$$
and verify that in the coordinates $(\tilde x, \tilde y)$ equation \eqref{eq3b} takes the form
\begin{equation}\label{eq3c}
\tilde y''=\phi_x(\tilde x,\phi^{-1}(\tilde x, \tilde y))_{\tilde x}+\phi_x(\tilde x,\phi^{-1}(\tilde x, \tilde y))_{\tilde y}\tilde y'.
\end{equation}
In above $\phi_x$ is the derivative of the mapping $(x,y)\mapsto \phi(x,y)$ with respect to the first coordinate, whereas $\phi_x(\tilde x,\phi^{-1}(\tilde x, \tilde y))_{\tilde x}$ and $\phi_x(\tilde x,\phi^{-1}(\tilde x, \tilde y))_{\tilde y}$ are derivatives of $(\tilde x,\tilde y)\mapsto \phi_x(\tilde x,\phi^{-1}(\tilde x, \tilde y))$ with respect to $\tilde x$ and $\tilde y$, respectively. The inverse $\phi^{-1}$ is taken with respect to the second coordinate function.

Equation \eqref{eq3c} is the derivative of
$$
\tilde y'=\phi_x(\tilde x,\phi^{-1}(\tilde x,\tilde y)).
$$
\end{proof}

As a corollary we get the following characterisation of linear connections projectively equivalent to a connection with skew-symmetric Ricci tensor.
\begin{theorem}\label{thm2}
Let $\nabla$ be a linear connection on $\R^2$. The following conditions are equivalent
\begin{enumerate}
\item[(a)] $\nabla$ is projectively equivalent to a connection with skew-symmetric Ricci curvature tensor.
\item[(b)] $\nabla$ is projectively equivalent to the Chern connection of a 3-web (or the connection $\nn$ associated to a Veronese web $\{\FF_t\}$).
\item[(c)] Unparametrised geodesics of $\nabla$ are described by solutions to a second order ODE which is the derivative of a first order ODE.
\end{enumerate}
\end{theorem}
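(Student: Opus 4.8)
The plan is to establish the cycle of implications $(b)\Rightarrow(a)\Rightarrow(c)\Rightarrow(b)$. Since projective equivalence depends only on the symmetric part of a connection, I would begin by remarking that one may replace any connection by its torsion-free symmetrisation without changing its unparametrised geodesics; in particular in $(a)$ the representative with skew-symmetric Ricci tensor may be taken torsion-free. The implication $(b)\Rightarrow(a)$ is then immediate from Proposition~\ref{prop2}(c): the connection $\nn$ of a Veronese web --- which coincides with the Chern connection of the associated $3$-web --- already has skew-symmetric Ricci tensor, so it lies in the projective class of $\nabla$ and witnesses $(a)$.

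For $(a)\Rightarrow(c)$ I would proceed as follows. Let $\nabla'$ be a torsion-free connection with skew-symmetric Ricci tensor projectively equivalent to $\nabla$. By Theorem~\ref{thm1} (equivalently the remark following Proposition~\ref{prop2}) there is a Veronese web $\{\FF_t\}$, written in suitable coordinates in the form \eqref{eq2b}, with $\nabla'=\nn$. Proposition~\ref{prop3} then describes the unparametrised geodesics of $\nabla'$, hence of $\nabla$, by the ODE \eqref{eq3b}; and Lemma~\ref{lemma1} shows that this ODE is point equivalent to the derivative of a first order ODE. This is precisely statement $(c)$.

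The substantive step is $(c)\Rightarrow(b)$. Here I would first use a point transformation to reduce to coordinates $(x,y)$ in which the unparametrised geodesics of $\nabla$ are the solutions of $y''=F_x+F_y\,y'$, the derivative of some first order ODE $y'=F(x,y)$. The key observation is that $y'-F(x,y)$ is constant along solutions, so the solution curves are exactly the integral curves of the vector fields $X+tY$, $t\in\R$, where $X=\partial_x+F\partial_y$ and $Y=\partial_y$, together with the vertical lines (the case $t=\infty$). Since $X$ and $Y$ are pointwise independent and we are on a plane, the linearly parametrised family of foliations $T\FF_t=\spn(X+tY)$ is automatically a Veronese web (for $n=1$ a one-form depending linearly on the projective parameter is already of Veronese type), equivalently the triple $\{\FF_{(1:0)},\FF_{(s_0:t_0)},\FF_{(0:1)}\}$ is a $3$-web by Proposition~\ref{prop1}. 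By Proposition~\ref{prop3} the union of all leaves of all $\FF_t$ is the projective structure of the associated connection $\nn$; but by construction this union is exactly the geodesic set of $\nabla$, so $\nabla$ is projectively equivalent to $\nn$, which is $(b)$.

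The main obstacle I anticipate is the synthesis in $(c)\Rightarrow(b)$: one must recognise that the solution family of the differentiated equation organises into a family of foliations depending linearly on a parameter, and then invoke the fact --- peculiar to the plane --- that every such family is a Veronese web, so that Propositions~\ref{prop1} and~\ref{prop3} become available. Some care is also needed at the very start of that step in passing from the hypothesis "point equivalent to the derivative of a first order ODE" to a working normal form $y''=F_x+F_y y'$, and, more routinely, in the torsion reduction used in $(a)$, where one should note that a connection with torsion has the same unparametrised geodesics as its symmetric part.
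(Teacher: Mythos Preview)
Your proof is correct, and in one respect it is cleaner than the paper's. The paper argues $(a)\Leftrightarrow(b)$ via Theorem~\ref{thm1} and $(b)\Rightarrow(c)$ via Lemma~\ref{lemma1}, exactly as you do in $(b)\Rightarrow(a)$ and $(a)\Rightarrow(c)$. The difference is in closing the cycle: the paper invokes the external result of Dunajski--West (Theorem~\ref{thmDW}) to get $(c)\Rightarrow(a)$, whereas you give a direct, self-contained argument for $(c)\Rightarrow(b)$ by observing that the first integral $y'-F(x,y)$ organises the solution curves of the differentiated equation into the leaves of the linear pencil $\spn(X+tY)$, which on a surface is automatically a Veronese web, and then matching geodesics via Proposition~\ref{prop3}. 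Your route avoids the citation and makes the construction of the web explicit; the paper's route is shorter on the page but outsources the content of this implication. One minor caution: your opening remark about replacing a connection by its torsion-free symmetrisation preserves geodesics but need not preserve skew-symmetry of Ricci, so it does not by itself justify taking the representative in $(a)$ to be torsion-free; however, in this paper's context $(a)$ is tacitly read with torsion-free representatives (as Theorem~\ref{thm1} requires), so this does not affect the validity of your argument.
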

\begin{proof}
The equivalence (a)$\iff$(b) follows from Theorem \ref{thm1} and the implication (b)$\implies$(c) follows from Lemma \ref{lemma1}. Therefore it is sufficient to prove that a second order ODE which is a derivative of a first order ODE gives a projective structure defined by a connection with skew-symmetric Ricci tensor. This fact was proved in \cite{DW} (see Theorem \ref{thmDW} below). Note that the condition \eqref{eqcond} is always satisfied for the derivatives of first order ODEs.
\end{proof}

\section{Twistor space and dual ODE}
The twistor space of a projective structure is the set of unparamterised geodesics. In the case of the projective structure on a plane the twistor space is a manifold of dimension two.

Theorem \ref{thm2} should be compared to the following result of Dunajski and West.
\begin{theorem}{\cite[Section 6.4, Proposition 3]{DW}}\label{thmDW}
There is a one-to-one correspondence between projective structures on a plane for which the twistor space fibers over $\R P^1$ and point equivalent classes of second order ODEs which are derivatives of first order ODEs.
\end{theorem}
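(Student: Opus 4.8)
The plan is to work through the standard dictionary, recalled above, between projective structures on $\R^2$ and point-equivalence classes of second order ODEs \eqref{eq3} satisfying \eqref{eqcond}; under it the theorem becomes the statement that the twistor space of such a structure fibers over $\R P^1$ if and only if its ODE class contains the derivative of a first order ODE. I work on the correspondence space $\mathcal{C}=PT\R^2$ with coordinates $(x,y,y')$ (the value $y'=\infty$ recording the vertical direction) and the total derivative field $D=\partial_x+y'\partial_y+\Phi(x,y,y')\partial_{y'}$, whose integral curves are the geodesics; the twistor space is $\mathcal{T}=\mathcal{C}/\langle D\rangle$, and "$\mathcal{T}$ fibers over $\R P^1$" means exactly that $D$ admits a submersive first integral $\tau\colon\mathcal{C}\to\R P^1$, determined up to a M\"obius reparametrisation of the target. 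Point transformations act on $(x,y,y')$ by bundle automorphisms preserving $D$ up to reparametrisation, hence act compatibly on $\Phi$ and on $\tau$, so both sides of the asserted correspondence are genuine equivalence classes and it is enough to match the two defining properties.

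One implication is immediate. If $\Phi=F_x+F_yy'$ for some $F=F(x,y)$, then $\tau_0:=y'-F(x,y)$ satisfies $D\tau_0=F_x+F_yy'-F_x-F_yy'=0$, and $\partial_{y'}\tau_0=1$, so $\tau_0$ descends to a submersion $\mathcal{T}\to\R$; since $\Phi$ has no term of degree $2$ or $3$ in $y'$ the vertical foliation is a geodesic foliation, and adjoining it as the fibre over $\infty$ gives a fibration $\mathcal{T}\to\R P^1$. As having a twistor fibration is point-invariant, every ODE point-equivalent to a derivative of a first order ODE has a twistor space fibring over $\R P^1$.

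Conversely, suppose the structure carries such a first integral $\tau$. First, $\partial_{y'}\tau$ cannot vanish on an open set: otherwise $\tau$ would locally be a function of $(x,y)$ with $\tau_x+y'\tau_y\equiv0$, forcing $\tau$ constant. Hence on a dense open set $\partial_{y'}\tau\neq0$ and the level sets $\{\tau=c\}$ are graphs $y'=P(x,y,c)$; by $D$-invariance each graph is the $1$-jet lift of a line field whose leaves are geodesics, so we obtain a 1-parameter family $\{\FF_c\}_{c\in\R P^1}$ of foliations of $\R^2$ by geodesics whose directions at each point $x$ exhaust $P(T_x\R^2)$. Straightening two of these foliations simultaneously to $\{x=\const\}$ and $\{y=\const\}$ (possible after a M\"obius reparametrisation of $c$) and using that the coordinate lines are then geodesics, the defining ODE loses its terms of degree $0$ and $3$ in $y'$, so in these coordinates it has the form $y''=\Phi_1(x,y)y'+\Phi_2(x,y)(y')^2$, i.e.\ the shape of \eqref{eq3b}. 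The crucial point — and the main obstacle — is to show that $\{\FF_c\}$ is a Veronese web in the sense of Section 2; equivalently, that after a further M\"obius reparametrisation of $c$ the first integral is fibrewise a M\"obius function of $y'$, $\tau=\frac{A_1+y'A_2}{B_1+y'B_2}$ with the coefficients functions of $(x,y)$ furnishing the coframe of the web. This is where condition \eqref{eqcond} is indispensable: I would derive it from the identity $\Phi_1P+\Phi_2P^2=P_x+P_yP$, valid for all $c$, by differentiating in $c$ (noting that $\Phi_1,\Phi_2$ are themselves recovered from any two non-coordinate members of the family by an invertible $2\times2$ linear system) and integrating the resulting linear equation in $c$ with the coordinate members $P\equiv0$, $P\equiv\infty$ as boundary data; equivalently, one shows that for an ODE satisfying \eqref{eqcond} every submersive first integral of $D$ satisfies the identity $\partial_{y'}^2\left((\partial_{y'}\tau)^{-1/2}\right)=0$ characterising M\"obius dependence on $y'$, which fails for a generic second order ODE.

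Granting this, the proof finishes quickly. By Proposition \ref{prop1} the family $\{\FF_c\}$ is the unique Veronese web extending any three of its members, so Proposition \ref{prop3} applies: the leaves of all $\FF_c$ are the geodesics of the associated connection $\nn$ of \eqref{eqCon}, and their ODE is \eqref{eq3b} for the corresponding $w$. Since the leaves of the $\FF_c$ realise every direction at every point, they are all the geodesics of our projective structure, whence its ODE coincides with \eqref{eq3b}; by Lemma \ref{lemma1} this ODE is point-equivalent to the derivative of a first order ODE. Combined with the implication of the second paragraph and the background bijection between projective structures and point-equivalence classes of ODEs satisfying \eqref{eqcond}, this yields the one-to-one correspondence.
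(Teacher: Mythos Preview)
First, a framing remark: the paper does \emph{not} prove Theorem~\ref{thmDW}; it is simply quoted from \cite{DW} and used as an input to Theorem~\ref{thm2}. So there is no ``paper's own proof'' to compare against, and your proposal is really an attempt to supply an independent argument.

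Your forward direction is fine: if $\Phi=F_x+F_yy'$ then $\tau_0=y'-F$ is a submersive first integral of $D$, and adjoining the vertical foliation at $\infty$ yields the fibration $\mathcal{T}\to\R P^1$.

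The converse, however, has a genuine gap at precisely the point you flag as ``crucial''. The assertion that, for an ODE satisfying \eqref{eqcond}, \emph{every} submersive first integral $\tau$ of $D$ satisfies
\[
\partial_{y'}^2\bigl((\partial_{y'}\tau)^{-1/2}\bigr)=0
\]
is false. Already for the flat equation $y''=0$ one has the submersive first integral $\tau=y'+(y')^3$, for which $(\partial_{y'}\tau)^{-1/2}=(1+3(y')^2)^{-1/2}$ has second $y'$-derivative $-3$ at $y'=0$. The point is that the fibration $\mathcal{T}\to\R P^1$ determines $\tau$ only up to an arbitrary diffeomorphism of the target, not up to a M\"obius transformation; so you cannot hope to prove that $\tau$ is fibrewise M\"obius in $y'$, only that \emph{some} reparametrisation $h\circ\tau$ is. Equivalently, you need that the diffeomorphisms $c\mapsto P(x,y,c)$ of $\R P^1$ differ from one another by elements of $PGL(2,\R)$, and this is not a consequence of the identity $P_x+PP_y=\Phi_1P+\Phi_2P^2$ alone by differentiating in $c$ as you sketch---that route produces a transport equation for $P_c$ along each individual leaf, but does not compare the maps $P(x,y,\cdot)$ at different points.

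This step is the real content of the theorem. Note also that your fallback ``take three members and invoke Proposition~\ref{prop1}'' does not bypass it: three transversal geodesic foliations do \emph{not} determine the projective structure (after straightening two of them the equation is $y''=\Phi_1y'+\Phi_2(y')^2$, and the third foliation $y'=P(x,y)$ imposes only the single relation $P_x+PP_y=\Phi_1P+\Phi_2P^2$ on the pair $(\Phi_1,\Phi_2)$), so the Veronese web produced by Proposition~\ref{prop1} from three of your $\FF_c$ need not have the same geodesics as the original structure unless you already know that the full family $\{\FF_c\}$ is Veronese. You must either prove that claim directly or follow a different route, e.g.\ the original argument in \cite[Section~6.4]{DW}.
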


The fibration over $\R P^1$ can be easily seen from the point of view of Veronese webs. Indeed, to a geodesic which is a leaf of $\FF_{(s:t)}$ one assigns the point $(s:t)\in\R P^1$.

In \cite{K2} we have characterised Veronese webs on in terms of ODEs in the following way (the analogous results are also proved for higher order ODEs and systems of ODEs).
\begin{theorem}{\cite[Theorem 1.1]{K2}}\label{thmK}
There is a one-to-one correspondence between Veronese webs on $\R^2$ and time-preserving contact equivalent classes of second order ODEs given in the form
\begin{equation}\label{eq4}
z''=F(t,z,z')
\end{equation}
for which the invariant
$$
K_0=-\partial_zF+\frac{1}{2}X_F(\partial_{z'}F)-\frac{1}{4}(\partial_{z'}F)^2,
$$
vanishes, where $X_F=\partial_t+z'\partial_z+F\partial_{z'}$ is the total derivative.
\end{theorem}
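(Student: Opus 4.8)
The plan is to realise the correspondence geometrically via the twistor space of the projective structure and then to identify the vanishing of $K_0$ with the vanishing of a Schwarzian derivative along the geodesics.

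First I would describe the two constructions. Given a Veronese web $\{\FF_t\}$ on a surface $M$, let $Z$ be the (local) space of leaves of all the foliations $\FF_t$. By Proposition \ref{prop2}(a) these leaves are the geodesics of $\nn$, so $Z$ is the twistor space of the associated projective structure; it is two--dimensional, and sending a leaf of $\FF_t$ to the value $t$ defines a submersion $Z\to\R P^1$. A local trivialisation of this submersion furnishes coordinates $(t,z)$ on $Z$, and each point $p\in M$ determines the section $t\mapsto(t,z_p(t))$ of $Z\to\R P^1$ made up of the leaves through $p$. Using the pointwise independence of the forms $\omega_0,\omega_1$ defining the web, one checks that the resulting two--parameter family of curves $z=z_p(t)$ is non--degenerate, hence locally the solution set of a unique equation $z''=F(t,z,z')$; changing the trivialisation of $Z\to\R P^1$ and the coordinate $z$ alters $F$ by a time--preserving contact transformation, so the class of $F$ is well defined. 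Conversely, starting from $z''=F(t,z,z')$ I would take $M$ to be the solution space, write $z_p$ for the solution labelled by $p\in M$, and define $\FF_t$ to be the foliation of $M$ by the level sets of $p\mapsto z_p(t)$. The two constructions are inverse to one another essentially by inspection, so the only real content is that the family $\{\FF_t\}$ coming from an ODE is a Veronese web precisely when $K_0\equiv0$.

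To test this I would use the coordinates $p=(p_1,p_2)$ on the solution space given by the initial data $z_p(0)=p_1$, $z_p'(0)=p_2$. Then $T_p\FF_t=\ker d_p\big(z_p(t)\big)=\ker\big(A_p(t)\,dp_1+B_p(t)\,dp_2\big)$ with $A_p(t)=\partial_{p_1}z_p(t)$, $B_p(t)=\partial_{p_2}z_p(t)$, and differentiating the equation with respect to $p$ shows that, for each fixed $p$, the pair $(A_p,B_p)$ is the fundamental system with $A_p(0)=1$, $A_p'(0)=0$, $B_p(0)=0$, $B_p'(0)=1$ of the linearisation of $z''=F$ along $z_p$, namely
$$
v''-(\partial_{z'}F)\,v'-(\partial_z F)\,v=0
$$
with coefficients taken along $z_p$. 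Hence the curve $t\mapsto[\,A_p(t):B_p(t)\,]\in P(T^*_pM)$ is the projectivisation of a fundamental system of this linear equation, and $\{\FF_t\}$ is a Veronese web exactly when, for every $p$, this curve has the linear form $[\alpha_0(p)+t\alpha_1(p):\beta_0(p)+t\beta_1(p)]$ --- equivalently, when the ratio $r_p(t)=B_p(t)/A_p(t)$ is a M\"obius function of $t$, i.e. when its Schwarzian derivative vanishes, $\{r_p,t\}=0$. (When $r_p$ is non-constant and M\"obius, the forms $\alpha_0\,dp_1+\beta_0\,dp_2$ and $\alpha_1\,dp_1+\beta_1\,dp_2$ are automatically pointwise independent, so the Veronese condition is genuinely met.)

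It remains to compute $\{r_p,t\}$. For a linear equation $v''+Pv'+Qv=0$ the classical formula for the ratio $r$ of two solutions is $\{r,t\}=2Q-P'-\tfrac12P^2$. Substituting $P=-\partial_{z'}F$, $Q=-\partial_z F$, and using that the $t$--derivative of $\partial_{z'}F$ along a solution equals $X_F(\partial_{z'}F)$ because $z''=F$ there, one gets
$$
\{r_p,t\}=-2\,\partial_z F+X_F(\partial_{z'}F)-\tfrac12(\partial_{z'}F)^2=2\,K_0\big(t,z_p(t),z_p'(t)\big).
$$
Since the $1$--jets $(t,z_p(t),z_p'(t))$ fill an open subset of the space $\{(t,z,z')\}$ as $p$ and $t$ vary, $\{\FF_t\}$ is a Veronese web if and only if $K_0$ vanishes identically, which combined with the first step gives the stated bijection. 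I expect the genuinely laborious part to be not this Schwarzian identity --- which is short and is the conceptual core --- but the surrounding bookkeeping: establishing the non--degeneracy that turns the family of sections of $Z\to\R P^1$ into a genuine second order ODE, and checking that the two sets of ambiguities (the trivialisation of $Z\to\R P^1$ together with the M\"obius freedom in the Veronese parameter, against the group of time--preserving contact transformations) match up, so that the construction descends to a bijection of equivalence classes.
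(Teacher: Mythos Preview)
The paper does not actually prove this theorem: it is quoted verbatim from \cite{K2} and used as a black box, so there is no ``paper's own proof'' to compare against. What you have written is a correct self-contained argument, and in fact it is essentially the standard one. The key computation --- that for the linearisation $v''+Pv'+Qv=0$ of the equation along a solution the Schwarzian of a ratio of independent solutions equals $2Q-P'-\tfrac12 P^2$, and that with $P=-\partial_{z'}F$, $Q=-\partial_zF$ this is exactly $2K_0$ --- is right, and your identification of the Veronese condition in dimension two with ``the projectivised annihilator depends M\"obius-linearly on $t$'' is the correct reading of the definition when $n=1$. The honest caveats you flag at the end (non-degeneracy of the two-parameter family of sections, and matching the M\"obius freedom in the web parameter with the time-preserving contact group on the ODE side) are precisely the points that need a few lines of care but contain no surprises.
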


 The invariant $K_0$ is sometimes called the Jacobi endomorphism \cite{CMS} and it also appears in \cite{G} where is denoted $T$. It should be stressed that it is not a point invariant of the equation. It is invariant with respect to contact transformations which preserve the independent variable $t$ (see \cite{JK} for the general theory of such transformations). The class of transformations is strictly more rigid than the class of point transformations. Actually, one can also allow the M\"obius transformations of $t$ (i.e. transformations of the form $t\mapsto \frac{at+b}{ct+d}$ where $a,b,c,d\in\R$ are constant and satisfy $ad-bc\neq 0$) and $K_0$ remains invariant. The M\"obius transformations of the independent variable correspond to the transformations of the projective parameter which parametrises the corresponding Veronese web. Summarising, Theorem \ref{thmK} together with Theorem \ref{thm1} give the following characterisation of torsion-free connections with skew-symmetric Ricci tensor in terms of invariants of ODEs.
 
\begin{corollary}
There is a one-to-one correspondence between torsion-free connections with skew-symmetric Ricci tensor and second order ODEs satisfying $K_0=0$ and given modulo time-preserving contact transformations and M\"obius transformations of the independent variable.
\end{corollary}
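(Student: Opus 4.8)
The plan is to obtain the statement by composing the two bijections already at our disposal: Theorem~\ref{thm1}, which matches torsion-free connections with skew-symmetric Ricci tensor with Veronese webs on $\R^2$ taken modulo M\"obius reparametrisations of the projective parameter, and Theorem~\ref{thmK}, which matches Veronese webs on $\R^2$ with time-preserving contact equivalence classes of second order ODEs of the form \eqref{eq4} satisfying $K_0=0$. Since both statements are bijections onto the indicated quotients, composing them yields a bijection between torsion-free connections with skew-symmetric Ricci tensor and \emph{some} class of ODEs with $K_0=0$; the actual content of the corollary is then the identification of this class, i.e. the verification that the M\"obius ambiguity of the web on the left-hand side of Theorem~\ref{thm1} is exactly accounted for, on the ODE side, by the M\"obius transformations of the independent variable.

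Concretely, I would proceed in three steps. First, take a torsion-free connection $\nabla$ with skew-symmetric Ricci tensor; by Theorem~\ref{thm1} it corresponds to a Veronese web $\{\FF_t\}$, unique up to a M\"obius change of $(s:t)$. Second, by Theorem~\ref{thmK} this web corresponds to an ODE \eqref{eq4} with $K_0=0$, unique up to a time-preserving contact transformation. Third -- and this is the step that carries the real content -- I would check that a M\"obius reparametrisation of the projective parameter of the web translates, under the web--ODE dictionary of \cite{K2}, into precisely a M\"obius transformation $t\mapsto\frac{at+b}{ct+d}$ of the independent variable of the associated ODE. This follows from the construction in \cite{K2}: the independent variable $t$ of \eqref{eq4} is the affine coordinate on the $\R P^1$ over which the twistor space fibers, which is the same parameter that labels the foliations $\FF_t$ of the web; hence relabelling the web by a M\"obius map relabels the independent variable of the ODE by the same map, and conversely. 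Moreover, as recalled in the discussion preceding the statement, $K_0=0$ is preserved by such transformations.

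It then remains to assemble the pieces. The time-preserving contact transformations together with the M\"obius transformations of $t$ generate a group acting on equations of the form \eqref{eq4}, and $K_0$ is invariant under this whole group, so the quotient of the set of ODEs with $K_0=0$ by it is well defined. By the second step the web is determined modulo time-preserving contact transformations, and by the third step the residual M\"obius freedom of the web is exactly the residual M\"obius freedom of the independent variable; therefore the composite assigns to each connection with skew-symmetric Ricci tensor a well-defined element of this quotient, injectively. Surjectivity is clear: an ODE with $K_0=0$ produces a Veronese web via Theorem~\ref{thmK}, which produces a connection with skew-symmetric Ricci tensor via Theorem~\ref{thm1}, and chasing the construction back shows that the recovered ODE differs from the original one only by a time-preserving contact transformation and a M\"obius change of $t$. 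I expect the main difficulty to be not any single computation but the bookkeeping of the two quotients, in particular making sure that the M\"obius group on the web side and the M\"obius group on the ODE side are genuinely identified by the dictionary, rather than merely both being present.
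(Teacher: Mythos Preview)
Your proposal is correct and follows exactly the route the paper takes: the corollary is obtained by composing Theorem~\ref{thm1} with Theorem~\ref{thmK}, together with the observation (stated in the paragraph preceding the corollary) that M\"obius reparametrisations of the projective parameter of the web correspond to M\"obius transformations of the independent variable of the ODE and preserve $K_0$. If anything, you are more careful than the paper, which treats the corollary as an immediate summary without spelling out the bookkeeping of the two quotients.
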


Equation \eqref{eq4} is dual to equation \eqref{eq3} in the sense of the Cartan duality for second order ODEs. To be more precise, the class of point equivalent equations defined by \eqref{eq4} is dual to the class of point equivalent equations defined by \eqref{eq3}. Equation \eqref{eq4} is an equation on the twistor space, i.e. both $t$ and $z$ can be considered as coordinates on the twistor space. Additionally $t$ is exactly the parameter which defines the fibration over $\R P^1$. The Veronese web for equation \eqref{eq4} is defined on the space of its solutions which is the $(x,y)$-space for equation \eqref{eq3}. Conversely, the twistor space is the solutions space for equation \eqref{eq3}. Theorem \ref{thmK} together with Theorem \ref{thm2} give the following result, which, in a sense, is dual to the result of \cite{R}.

\begin{corollary}
A second order ODE is point equivalent to the derivative of a first order ODE if and only if its dual equation is point equivalent to an equation for which $K_0=0$.
\end{corollary}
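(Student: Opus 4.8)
The plan is to assemble the statement from Theorem~\ref{thmK}, Theorem~\ref{thm2}, and the properties of the Cartan duality for second order ODEs recalled above. Throughout, let $E$ denote a given second order ODE written in the form \eqref{eq3} and let $E^\ast$ denote its dual equation. Recall that $E$ is an equation on the $(x,y)$-space, which is simultaneously the solution space of $E^\ast$, while $E^\ast$ is an equation on the twistor space of $E$, which is the solution space of $E$; moreover Cartan duality descends to point-equivalence classes and is an involution there, so both ``$E$ is point equivalent to the derivative of a first order ODE'' and ``$E^\ast$ is point equivalent to an equation with $K_0=0$'' are conditions on the respective point-equivalence classes.

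First I would treat the forward implication. If $E$ is point equivalent to the derivative of a first order ODE, then by Theorem~\ref{thm2} the projective structure $[E]$ on the $(x,y)$-space is the one defined by the connection $\nn$ of some Veronese web $\{\FF_t\}$ on that plane; in particular $E$ is point equivalent to the equation \eqref{eq3b} attached to this web. By the duality discussion above the dual of \eqref{eq3b} is an equation of the form \eqref{eq4}, and since the Veronese web it carries on its solution space is again $\{\FF_t\}$, Theorem~\ref{thmK} shows that this equation satisfies $K_0=0$. Because duality respects point-equivalence classes, $E^\ast$ is point equivalent to that equation, hence to an equation with $K_0=0$ (in fact time-preserving contact equivalent up to a M\"obius reparametrisation of the independent variable).

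For the converse, suppose $E^\ast$ is point equivalent to some $\tilde E$ of the form \eqref{eq4} with $K_0=0$. Since Cartan duality is an involution on point-equivalence classes, $E$ is point equivalent to $\tilde E^\ast$. By Theorem~\ref{thmK} the equation $\tilde E$ carries a Veronese web $\{\FF_t\}$ on its solution space, and by the duality discussion above $\tilde E^\ast$ is the equation \eqref{eq3b} of this web, whose unparametrised solutions are the geodesics of the associated connection $\nn$ (Proposition~\ref{prop3}). Hence $[E]=[\tilde E^\ast]$ is the projective structure of $\nn$, and Theorem~\ref{thm2} yields that $E$ is point equivalent to the derivative of a first order ODE.

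The main obstacle is the bookkeeping hidden in the Cartan-duality step: one must verify that duality is well defined and involutive on point-equivalence classes (so that the statement is even meaningful), that under duality the Veronese web carried on the twistor space by the class of \eqref{eq4} is precisely the web whose connection $\nn$ defines the projective structure of \eqref{eq3}, and that the residual freedom of M\"obius reparametrisations of $t$ on the twistor side is absorbed into point (indeed contact) equivalence. Apart from this, the argument is a direct concatenation of the quoted results; the only conceptual subtlety worth emphasising is that $K_0$ is not itself a point invariant, so the statement concerns the \emph{existence} of a representative with $K_0=0$ inside the point-equivalence class of the dual equation, rather than the vanishing of $K_0$ for the dual equation as such.
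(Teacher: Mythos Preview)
Your proposal is correct and follows exactly the route the paper indicates: the paper simply states that the corollary is obtained by combining Theorem~\ref{thmK} with Theorem~\ref{thm2}, and you have spelled out how that combination works via the Cartan duality between \eqref{eq3} and \eqref{eq4}. Your discussion of the bookkeeping (duality on point-equivalence classes, the M\"obius freedom, and the fact that $K_0$ is not a point invariant) makes explicit what the paper leaves implicit, but the argument is the same.
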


\vskip 2ex
{\bf Acknowledgements.} I wish to thank Maciej Dunajski for useful discussions. The work has been partially supported by the Polish National Science Centre grant ST1/03902.

\end{document}